\newcounter{hdps}
\newcommand{\RemoveAlgoNumber}{\renewcommand{\fnum@algocf}{\AlCapSty{\AlCapFnt\algorithmcfname}}}
\newcommand{\RevertAlgoNumber}{\algocf@resetfnum}
\newtheorem{thm}{Theorem}[section]
\newtheorem{lem}{Lemma}[section]
\newtheorem{examp}{Example}[section]
\newtheorem{hmalgo}{Algorithm}{\bf}{\rmfamily}
\newtheorem{hmproc}{Procedure}{\bf}{\rmfamily}
\newtheorem{rem}{Remark}[section]{\itshape}{\rmfamily}
\newenvironment{proof}{\noindent{\it Proof.~}}{\medskip}
\renewcommand{\section}{%
  \@startsection{section}%
   {1}%
   {\z@}%
   {-3.5ex \@plus -1ex \@minus -.2ex}%
   {2.3ex \@plus.2ex}%
   {\normalfont\normalsize\bfseries}%
}%
\def\eqnarray{\stepcounter{equation}\let\@currentlabel=\theequation
\global\@eqnswtrue
\global\@eqcnt\z@\tabskip\@centering\let\\=\@eqncr
$$\halign to \displaywidth\bgroup\@eqnsel\hskip\@centering
  $\displaystyle\tabskip\z@{##}$&\global\@eqcnt\@ne 
  \hfil$\;{##}\;$\hfil
  &\global\@eqcnt\tw@ $\displaystyle\tabskip\z@{##}$\hfil 
   \tabskip\@centering&\llap{##}\tabskip\z@\cr}
    \renewcommand{\theequation}{%
    \thesection.\arabic{equation}}
\newcommand{\vc}{\bm}
\newcommand{\ol}{\overline}
\newcommand{\wt}{\widetilde}
\newcommand{\down}[2]{\smash{\lower#1\hbox{#2}}}
\newcommand{\up}[2]{\smash{\lower-#1\hbox{#2}}}
\newcommand{\dm}{\displaystyle}
\newcommand{\PP}{\mathsf{P}}
\newcommand{\calT}{\mathcal{T}}
\newcommand{\bbA}{\mathbb{A}}
\newcommand{\bbB}{\mathbb{B}}
\newcommand{\bbF}{\mathbb{F}}
\newcommand{\bbS}{\mathbb{S}}
\newcommand{\bbZ}{\mathbb{Z}}
\newcommand{\card}{\mathrm{card}}
\newcommand{\dd}[1]{\if#11 1\!\!1 
\else {\if#1C I\!\!\!C
\else {\if#1G I\!\!\!G 
\else {\if#1J J\!\!\!J 
\else {\if#1S S\!\!\!S
\else {\if#1Z Z\!\!\!Z
\else {\if#1Q O\!\!\!\!Q
\else I\!\!#1
\fi}
\fi}
\fi}
\fi} 
\fi} 
\fi} 
\fi} 
\title{Binary sampling from discrete distributions}
\author{Hiroyuki Masuyama}
\begin{document}\thispagestyle{plain} 
\twocolumn[
\vspace{5mm}
\rule{0.99\textwidth}{0.4mm}
\begin{flushleft}
{\LARGE\bf
Binary sampling from discrete distributions
}

\vspace{5mm}

{\large\bf
Hiroyuki Masuyama%
}
\rule{0.99\textwidth}{0.4mm}
\vspace{20mm}

\end{flushleft}
]
\noindent
{
\textbf{Abstract}~~
This paper considers direct sampling methods from discrete target distributions. The inverse transform sampling (ITS) method is one of the most popular direct sampling methods. The main purpose of this paper is to propose a direct sampling algorithm that supersedes the binary-search ITS method (which is an improvement of the ITS method with binary search). The proposed algorithm is based on binarizing the support set of the target distribution. Thus, the proposed algorithm is referred to as binary sampling (BS). The BS algorithm consists of two procedures: backward binary sampling (BBS) and forward binary sampling (FBS). The BBS procedure draws a single sample (the first sample) from the target distribution while constructing a one-way random walk on a binary tree for the FBS procedure. By running the random walk, the FBS procedure generates the second and subsequent samples. The BBS and FBS procedures have $O(N)$ and $O(\ln N)$ time complexities, respectively, and they also have $O(N)$ space complexity, where $N+1$ is the cardinality of the support set of the target distribution. Therefore, the time and space complexities of the BS algorithm are equivalent to those of the standard (possibly best) binary-search ITS algorithm. However, the BS algorithm has two advantages over the standard binary-search ITS algorithm. First, the BBS procedure is parallelizable and thus the total running time of the BS algorithm can be reduced. Second, the BS algorithm is more accurate in terms of relative rounding error that influences generated samples.
}


\noindent
\rule{0.49\textwidth}{0.2mm}
\vspace{-10mm}
\begin{flushleft}
This research was supported in part by JSPS KAKENHI \break 
Grant Number JP15K00034.
\end{flushleft}
\vspace{-7mm}
\rule{0.49\textwidth}{0.2mm}

\smallskip

\noindent
H. Masuyama\\
Email: masuyama@sys.i.kyoto-u.ac.jp\\
{\it Department of Systems
Science, Graduate School of Informatics, Kyoto University
Kyoto 606-8501, Japan}

\smallskip

\noindent
{
{\bf Keywords:}
Distribution;
Binary tree;
Pairwise summation;
Parallelizability;
Inverse transform sampling
%
%

\smallskip

\noindent
{\bf Mathematics Subject Classification:} 65C05; 65C10
}

\smallskip

\section{Introduction}\label{sec-intro}

In this paper, we consider sampling from discrete target (probability) distributions.
Sampling from target distributions is crucial for Monte Carlo methods. The methods of sampling can be categorized into two groups: Markov chain Monte Carlo (MCMC) methods (see, e.g., \citealt{Broo11}) and direct sampling methods (i.e., non MCMC methods; see, e.g., \citealt{Devr86}). 

MCMC methods include Metropolis-Hastings algorithm, Gibbs sampling, slice sampling, etc. Basically, MCMC methods are approximate sampling methods, except for ``Coupling From The Past (CFTP)" (see, e.g., \citealt{Hube16-book}). The CFTP algorithm achieves {\it exact sampling (or perfect sampling)}, i.e., generates samples that {\it exactly (or perfectly)} follow the target distribution. 

Direct sampling methods achieve exact sampling, and include inverse transform sampling (ITS), acceptance-rejection sampling, and importance sampling, etc. These methods are not, in general, suitable for high-dimensional target distributions. However, the methods do not have to construct Markov chains and therefore are more easily implementable than MCMC methods.

Among the above direct sampling methods, we focus on the ITS method (see, e.g., \citealt[Section III.2.1]{Devr86}). This has three reasons: (i) The ITS method is often used to generate samples from proposal distributions in acceptance-rejection sampling and importance sampling; (ii) itself does not require any proposal distribution; and (iii) is flexible and easily implementable for discrete target distributions.

It should be noted that the naive algorithm of the ITS method (called the {\it naive ITS algorithm}, for short) requires the cumulative distribution function of the target distribution in order to generate samples. Thus, if we know only the probability mass function of the target distribution, we have to compute its cumulative distribution function. This preprocessing has time complexity of $O(N)$, where $O(\cdot)$ represents Big-$O$ notation and (following the definition introduced later) $N+1$ denotes the {\it cardinality of the support set} (called {\it size} for short) of the target distribution. Furthermore, the naive ITS algorithm takes, at worst, $O(N)$ time to generate a sample by mapping a uniform random number to an element of the support set of the target distribution.

To reduce the running time of this mapping, we can use {\it binary search}. For simplicity, we call such an improvement of the ITS method with binary search the {\it binary-search ITS method}. The binary-search ITS method has some algorithms depending on what type of a binary tree is constructed. The standard (and probably best) binary-search ITS algorithm constructs a complete binary tree such that its leaves store the probabilities (masses) of the target distribution and the other nodes (the root and internal nodes) store the sums of the probabilities of the leaves retrieved sequentially by inorder traversal (see \citealt[Section III.2]{Devr86}). Although this standard binary-search ITS algorithm generates a sample in $O(\ln N)$ time, its preprocessing (constructing the binary tree) has $O(N)$ time complexity and produces $O(N)$ relative rounding error in computing the probabilities stored in the root and internal nodes.

The main contribution of this paper is to propose a direct sampling algorithm that supersedes the binary-search ITS method and, of course, the ITS method. The proposed algorithm is based on binarizing the support set of the target distribution. Hence, we refer to the proposed algorithm as {\it binary sampling (BS)}.
The BS algorithm consists of two procedures: {\it backward binary sampling (BBS)} and {\it forward binary sampling (FBS)}.
Although the BBS procedure is the preprocessing of the FBS one, the former
 generates a single sample while constructing a one-way random walk on a binary tree for the latter, which is achieved by the pairwise summation of the target distribution. By running the one-way random walk, the FBS procedure generates samples.

The BBS and FBS procedures have $O(N)$ and $O(\ln N)$ time complexities, respectively, which are equivalent to those of the preprocessing and main processing of the standard binary-search ITS algorithm. 
It should be noted that the BBS procedure (the preprocessing of the BS algorithm) generates a sample whereas the preprocessing of the standard binary-search ITS algorithm does not. 
In addition, since the BBS procedure
performs the pairwise summation of the target distribution, this procedure causes only $O(\ln N)$ relative rounding error and is parallelizable.
Therefore, our BS algorithm is more accurate and scalable than the standard binary-search ITS algorithm.

The rest of this paper is divided into four sections. Section~\ref{sec-preliminaries} presents preliminary results together with basic definitions and notation. Section~\ref{sec-main} describes the proposed algorithm, i.e., the BS algorithm. Section~\ref{sec-discussion} compares the BS algorithm with the naive ITS algorithm and the standard binary-search ITS algorithm. Finally, Section~\ref{sec-applicability} considers the adaptability of the BS algorithm to high-dimensional target distributions. 

\section{Preliminaries}\label{sec-preliminaries}

We consider sampling from a target distribution with support set $\bbZ_N := \{0,1,\dots,N\}$, where $N$ denotes a nonnegative integer, i.e., $N \in \bbZ_+:=\{0,1,2,\dots\}$.
Let $\{\pi(i);i \in \bbZ_N\}$ denote the target distribution. Note here that $\sum_{i=0}^N \pi(i) = 1$ and 
\begin{equation}
\min_{i\in\bbZ_N}\pi(i)>0.
\label{cond-pi}
\end{equation}
Furthermore, let $d$ denote an integer such that $2^{d-1} < N+1 \le 2^d$, 
or equivalently,
\begin{equation}
d = \lceil \log_2 (N+1) \rceil.
\label{defn-d-02}
\end{equation}
For convenience, we set
\[
\pi(i) = 0,\qquad i=N+1,N+2,\dots,2^d - 1.
\]

We need more definitions. For $\ell \in \bbZ_d$, let $\vc{n}^{(\ell)}$ denote
\begin{eqnarray*}
\vc{n}^{(0)} &=& \varnothing,
\\
\vc{n}^{(\ell)} &=& (n_1,n_2, \dots, n_{\ell}) \in \bbB^{\ell},
\qquad \ell=1,2,\dots,d, 
\end{eqnarray*}
where $\bbB=\{0,1\}$.
For $\ell=1,2,\dots,d$, let $\sigma_{\ell}$ denote a function from $\bbB^{\ell}$ to $\bbZ_{2^{\ell}-1}$ such that, for $\vc{n}^{(\ell)}\in\bbB^{\ell}$, 
\begin{equation*}
\sigma_{\ell}(\vc{n}^{(\ell)})
= n_1 2^0 + n_2  2^1 + \cdots + n_{\ell} 2^{\ell-1}
= \sum_{j=1}^{\ell} n_{j} 2^{j-1},
\end{equation*}
which is equivalent to the binary number $n_{\ell}n_{\ell-1}\cdots n_1$.
We then define $\{\varpi_{\ell}(\vc{n}^{({\ell})});\vc{n}^{({\ell})} \in \bbB_{\ell}\}$'s, $\ell\in\bbZ_d$, by the recursion:
\begin{eqnarray}
\varpi_d( \vc{n}^{(d)} ) 
&=& \pi( \sigma_d(  \vc{n}^{(d)} ) )
\nonumber
\\
&=& \pi(\mbox{$\sum_{j=1}^d n_{j} 2^{j-1}$}), \qquad \vc{n}^{(d)} \in \bbB^d,
\label{defn-pi_d(n)}
\end{eqnarray}
and, for $\ell=d-1,d-2,\dots,0$,
\begin{eqnarray}
\varpi_{\ell}(\vc{n}^{(\ell)}) 
&=& \varpi_{\ell+1}(\vc{n}^{(\ell)},0) 
\nonumber
\\
&& {} ~ 
 + \varpi_{\ell+1}(\vc{n}^{(\ell)},1),
\qquad \vc{n}^{(\ell)} \in \bbB^{\ell}.
\label{defn-pi_i(n)}
\end{eqnarray}
Note here that the computation of $\{\varpi_{\ell}(\vc{n}^{({\ell})}); \vc{n}^{({\ell})} \in \bbB_{\ell}\}$'s, $\ell\in\bbZ_d$, is the pairwise summation of the target distribution (see Fig.~\ref{fig-varpi}).
\begin{figure*}[htb]
	\centering
	\includegraphics[scale=0.33,bb=0 0 698 264]{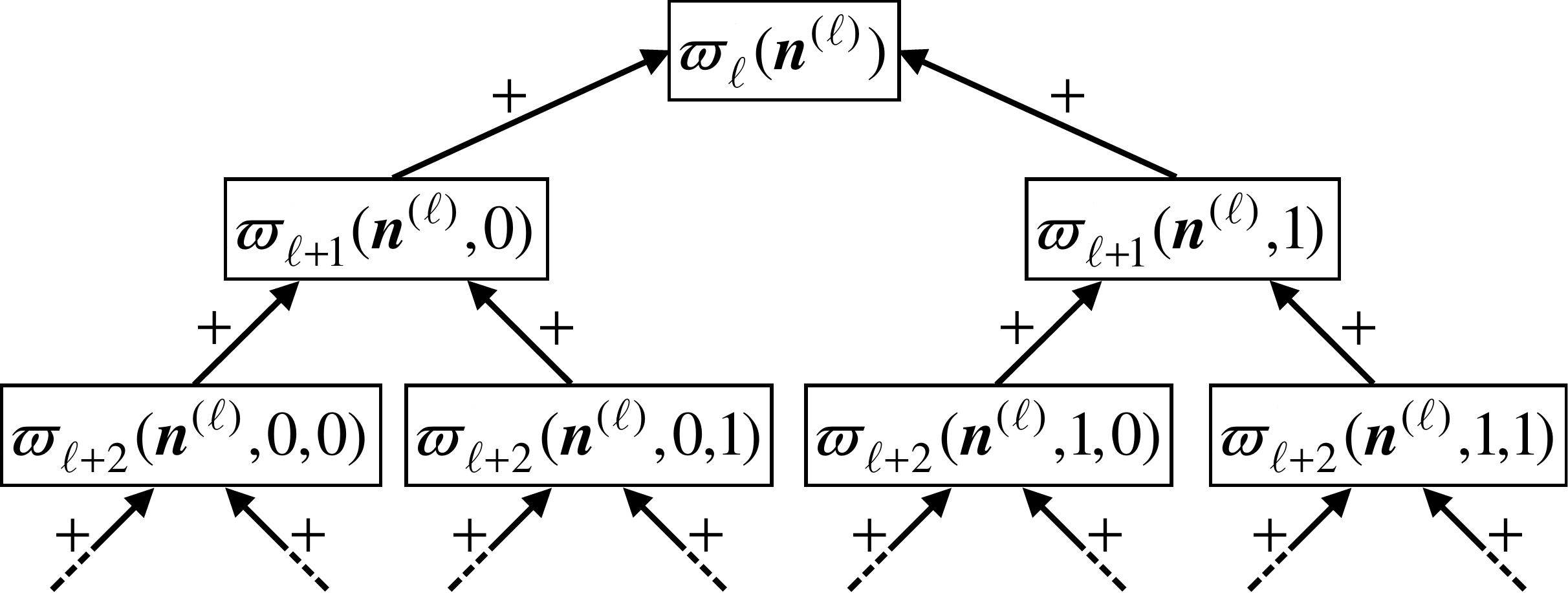}
	\caption{Computation of $\{\varpi_{\ell}(\vc{n}^{({\ell})})\}$ by pairwise summation}
	\label{fig-varpi}
\end{figure*}

It follows from (\ref{defn-pi_d(n)}) and (\ref{defn-pi_i(n)}) that, for $\ell \in \bbZ_{d-1}$,
\begin{eqnarray}
\varpi_{\ell}(\vc{n}^{(\ell)}) 
&=& 
\sum_{ n_{\ell+1} \in\bbB  } 
\sum_{ n_{\ell+2} \in\bbB  } 
\cdots 
\sum_{ n_d \in\bbB  } 
\varpi_d(\vc{n}^{(d)}) 
\nonumber
\\
&=& 
\sum_{ n_{\ell+1} \in\bbB  } 
\sum_{ n_{\ell+2} \in\bbB  } 
\cdots 
\sum_{ n_d \in\bbB  } 
\pi( \sigma_d(  \vc{n}^{(d)} ) ), 
\label{eqn-varpi_{ell}(n^{(ell)})} 
\end{eqnarray}
where
\begin{equation}
\varpi_0(\vc{n}^{(0)}) = 1.
\label{eqn-varpi_0}
\end{equation}
It also follows from (\ref{cond-pi}) and (\ref{defn-pi_d(n)}) that $\sigma_d(  \vc{n}^{(d)} ) \in \bbZ_N$ if and only if $\varpi_d(\vc{n}^{(d)}) > 0$. For later use, let $\bbB_+^{(d)} $ denote
\begin{eqnarray}
\bbB_+^{(d)} 
&=& \{\vc{n}^{(d)} \in \bbB^d: \sigma_d(  \vc{n}^{(d)} ) \in \bbZ_N\}
\nonumber
\\
&=& \{\vc{n}^{(d)} \in \bbB^d: \varpi_d(\vc{n}^{(d)}) > 0\}.
\label{defn-B_+^{(d)}}
\end{eqnarray}
For $\ell=1,2,\dots,d-1$, let $\bbB_+^{(\ell)}$ denote
\begin{eqnarray}
\bbB_+^{(\ell)} 
&=& 
\{
\vc{n}^{(\ell)} \in \bbB^{\ell}: 
(\vc{n}^{(\ell)},n_{\ell+1}, \dots,n_d)\in \bbB_+^{(d)} 
\}
\nonumber
\\
&=&
\{
\vc{n}^{(\ell)} \in \bbB^{\ell}: 
\sigma_d(  \vc{n}^{(d)} ) \in \bbZ_N
\},
\label{defn-B_+^{(l)}}
\end{eqnarray}
where the second equality holds due to (\ref{defn-B_+^{(d)}}) and $\vc{n}^{(d)} = (\vc{n}^{(\ell)},n_{\ell+1}, \dots,n_d)$. Equations (\ref{eqn-varpi_{ell}(n^{(ell)})}), (\ref{defn-B_+^{(d)}}) and (\ref{defn-B_+^{(l)}}) imply that
\begin{eqnarray}
\vc{n}^{(d)} \in \bbB_+^{(d)}
&\Longrightarrow&
\vc{n}^{(\ell)} \in \bbB_+^{(\ell)} ~\mbox{and}~ \varpi_{\ell}(\vc{n}^{(\ell)}) > 0
\nonumber
\\&& \mbox{for all}~\ell=1,2,\dots,d-1.
\label{eqn-161007-02}
\end{eqnarray}

We now prove a lemma, which presents a basic idea behind our sampling algorithm.
\begin{lem}\label{lem-pi(sigma_d(n))}
For $\vc{n}^{(d)} \in \bbB_+^{(d)}$,
\begin{align}
\pi( \sigma_d( \vc{n}^{(d)} ) )
&= 
\prod_{\ell=0}^{d-1}
 \{ \rho_{\ell}(\vc{n}^{(\ell)}) \}^{n_{\ell+1}}  
\nonumber
\\
& {} \qquad \times 
 \{ \ol{\rho}_{\ell}(\vc{n}^{(\ell)}) \}^{1 - n_{\ell+1}},
\label{eqn-varpi(n)-02}
\end{align}
where $\rho_{\ell}(\vc{n}^{(\ell)})$'s and $\ol{\rho}_{\ell}(\vc{n}^{(\ell)})$'s, $\ell \in \bbZ_{d-1}$, $\vc{n}^{(\ell)} \in \bbB_+^{(\ell)}$, are given by
\begin{eqnarray}
\rho_{\ell}(\vc{n}^{(\ell)}) 
&=& 
{\varpi_{\ell+1}(\vc{n}^{(\ell)},1) 
\over \varpi_{\ell}(\vc{n}^{(\ell)})
}
\nonumber
\\
&=& 
{\varpi_{\ell+1}(\vc{n}^{(\ell)},1) 
\over \varpi_{\ell+1}(\vc{n}^{(\ell)},0) + \varpi_{\ell+1}(\vc{n}^{(\ell)},1)},
\label{defn-rho_l(n)}
\\
\ol{\rho}_{\ell}(\vc{n}^{(\ell)}) &=& 1 - \rho_{\ell}(\vc{n}^{(\ell)}).
\label{defn-ol{rho}_l(n)}
\end{eqnarray}
\end{lem}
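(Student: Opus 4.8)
The plan is to prove the identity by a telescoping-product argument. Start from the right-hand side of \eqref{eqn-varpi(n)-02} and observe that, by the definitions \eqref{defn-rho_l(n)} and \eqref{defn-ol{rho}_l(n)}, for each $\ell \in \bbZ_{d-1}$ and each $\vc{n}^{(\ell)} \in \bbB_+^{(\ell)}$ we have
\[
\{ \rho_{\ell}(\vc{n}^{(\ell)}) \}^{n_{\ell+1}}
 \{ \ol{\rho}_{\ell}(\vc{n}^{(\ell)}) \}^{1 - n_{\ell+1}}
= {\varpi_{\ell+1}(\vc{n}^{(\ell)},n_{\ell+1}) \over \varpi_{\ell}(\vc{n}^{(\ell)})}
= {\varpi_{\ell+1}(\vc{n}^{(\ell+1)}) \over \varpi_{\ell}(\vc{n}^{(\ell)})},
\]
because $\rho_{\ell}$ picks out the numerator $\varpi_{\ell+1}(\vc{n}^{(\ell)},1)$ when $n_{\ell+1}=1$ and $\ol{\rho}_{\ell} = 1-\rho_{\ell} = \varpi_{\ell+1}(\vc{n}^{(\ell)},0)/\varpi_{\ell}(\vc{n}^{(\ell)})$ (using \eqref{defn-pi_i(n)}) when $n_{\ell+1}=0$, and in both cases $(\vc{n}^{(\ell)},n_{\ell+1}) = \vc{n}^{(\ell+1)}$.

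Next I would substitute this into the product over $\ell = 0, 1, \dots, d-1$, so that the right-hand side of \eqref{eqn-varpi(n)-02} becomes
\[
\prod_{\ell=0}^{d-1} {\varpi_{\ell+1}(\vc{n}^{(\ell+1)}) \over \varpi_{\ell}(\vc{n}^{(\ell)})}
= {\varpi_d(\vc{n}^{(d)}) \over \varpi_0(\vc{n}^{(0)})},
\]
the last equality being the telescoping collapse. Finally, invoke \eqref{eqn-varpi_0} to replace the denominator $\varpi_0(\vc{n}^{(0)})$ by $1$, and invoke \eqref{defn-pi_d(n)} to identify $\varpi_d(\vc{n}^{(d)})$ with $\pi(\sigma_d(\vc{n}^{(d)}))$, yielding the left-hand side of \eqref{eqn-varpi(n)-02}.

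The only genuine point requiring care — and the step I would flag as the main obstacle — is well-definedness: the factors $\rho_{\ell}(\vc{n}^{(\ell)})$ involve division by $\varpi_{\ell}(\vc{n}^{(\ell)})$, and the telescoping rearrangement implicitly assumes every denominator $\varpi_0(\vc{n}^{(0)}), \varpi_1(\vc{n}^{(1)}), \dots, \varpi_{d-1}(\vc{n}^{(d-1)})$ is strictly positive (and that each intermediate $\vc{n}^{(\ell)}$ lies in $\bbB_+^{(\ell)}$, so that $\rho_{\ell}(\vc{n}^{(\ell)})$ is actually defined). This is exactly what the hypothesis $\vc{n}^{(d)} \in \bbB_+^{(d)}$ buys us: by the implication \eqref{eqn-161007-02} we get $\vc{n}^{(\ell)} \in \bbB_+^{(\ell)}$ and $\varpi_{\ell}(\vc{n}^{(\ell)}) > 0$ for all $\ell = 1, 2, \dots, d-1$, while $\varpi_0(\vc{n}^{(0)}) = 1 > 0$ by \eqref{eqn-varpi_0}. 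So I would open the proof by citing \eqref{eqn-161007-02} and \eqref{eqn-varpi_0} to record that all the quantities appearing on the right-hand side of \eqref{eqn-varpi(n)-02} are well-defined and that all denominators are positive, and only then carry out the telescoping computation above.
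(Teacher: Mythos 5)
Your proposal is correct and follows essentially the same route as the paper's proof: the key identity $\{\rho_{\ell}(\vc{n}^{(\ell)})\}^{n_{\ell+1}}\{\ol{\rho}_{\ell}(\vc{n}^{(\ell)})\}^{1-n_{\ell+1}} = \varpi_{\ell+1}(\vc{n}^{(\ell+1)})/\varpi_{\ell}(\vc{n}^{(\ell)})$, the telescoping product collapsing to $\varpi_d(\vc{n}^{(d)})/\varpi_0(\vc{n}^{(0)})$, and the appeal to (\ref{eqn-161007-02}), (\ref{eqn-varpi_0}) and (\ref{defn-pi_d(n)}) are exactly the paper's steps. Your explicit emphasis on well-definedness of the denominators is also present in the paper, so there is nothing to change.
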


\begin{proof}
Fix $\vc{n}^{(d)} \in \bbB_+^{(d)}$ arbitrarily.
It then follows from (\ref{eqn-161007-02}), (\ref{defn-rho_l(n)}) and (\ref{defn-ol{rho}_l(n)}) that $\rho_{\ell}(\vc{n}^{(\ell)})$'s and $\ol{\rho}_{\ell}(\vc{n}^{(\ell)})$'s are well-defined for $\ell \in \bbZ_{d-1}$ and $\vc{n}^{(\ell)} \in \bbB_+^{(\ell)}$. Note here that the second equality of (\ref{defn-rho_l(n)}) holds due to (\ref{defn-pi_i(n)}). 
Note also that (\ref{defn-rho_l(n)}) and (\ref{defn-ol{rho}_l(n)}) yield
\[
\ol{\rho}_{\ell}(\vc{n}^{(\ell)})
= {\varpi_{\ell+1}(\vc{n}^{(\ell)},0) 
\over \varpi_{\ell}(\vc{n}^{(\ell)})
},
\]
and thus
\begin{eqnarray}
{\varpi_{\ell+1}(\vc{n}^{(\ell+1)}) 
\over \varpi_{\ell}(\vc{n}^{(\ell)})
}
&=& {\varpi_{\ell+1}(\vc{n}^{(\ell)},n_{\ell+1}) 
\over \varpi_{\ell}(\vc{n}^{(\ell)})
}
\nonumber
\\
&=& \{ \rho_{\ell}(\vc{n}^{(\ell)}) \}^{n_{\ell+1}}  
 \{ \ol{\rho}_{\ell}(\vc{n}^{(\ell)}) \}^{1 - n_{\ell+1}}. \qquad
\label{eqn-161007-01}
\end{eqnarray}
From (\ref{defn-pi_d(n)}), (\ref{eqn-varpi_0}) and (\ref{eqn-161007-01}), we have
\begin{eqnarray*}
\lefteqn{
\pi( \sigma_d( \vc{n}^{(d)} ) )
}
\quad &&
\nonumber
\\
&=& \varpi_0( \vc{n}^{(0)} ) 
{\varpi_{1}(\vc{n}^{(1)})
\over \varpi_0(\vc{n}^{(0)})
}
{\varpi_{}(\vc{n}^{(2)}) 
\over \varpi_{1}(\vc{n}^{(1)})
}
\cdots
{\varpi_d(\vc{n}^{(d)}) 
\over \varpi_{d-1}(\vc{n}^{(d-1)})
}
\nonumber
\\
&=& 
\prod_{\ell=0}^{d-1}
 \{ \rho_{\ell}(\vc{n}^{(\ell)}) \}^{n_{\ell+1}}  
 \{ \ol{\rho}_{\ell}(\vc{n}^{(\ell)}) \}^{1 - n_{\ell+1}},
\end{eqnarray*}
which shows that (\ref{eqn-varpi(n)-02}) holds. 
\end{proof}

\section{The proposed sampling algorithm: binary sampling}\label{sec-main}

In this section, we describe our sampling algorithm. As mentioned in Section~\ref{sec-intro}, the algorithm consists of the two procedures:  backward binary sampling
(BBS) and forward binary sampling (FBS). 
In what follows, we provide the details of the BBS and FBS procedures.

The BBS procedure is the preprocessing of the FBS procedure. The BBS procedure computes the probabilities $\{\varpi_{\ell}(\vc{n}^{(\ell)});\vc{n}^{(\ell)} \in \bbB_+^{(\ell)}\}$ for $\ell=d-1,d-2,\dots,1$ by the pairwise summation of $\{\varpi_d(\vc{n}^{(d)});\vc{n}^{(d)} \in \bbB_+^{(d)}\}$. Using the computed probabilities, the BBS procedure constructs a one-way random walk on a binary tree, which is used by the FBS procedure.

To describe this one-way random walk, we introduce some definitions. Let $\bbS$ denote
\[
\bbS = \bigcup_{\ell=0}^d \bbB_+^{(\ell)},
\]
where $\bbB_+^{(0)} = \varnothing$.
Let $\{X_{\ell};\ell \in \bbZ_d\}$ denote a random walk with state space $\bbS$, which evolves in the following law:
\begin{equation}
\PP(X_0 = \varnothing) = 1,
\label{initial-cond}
\end{equation}
and, for $\ell \in \bbZ_{d-1}$, $\vc{n}^{(\ell)} \in \bbB_+^{(\ell)}$ and $n_{\ell+1} \in \bbB$,
\begin{eqnarray}
\lefteqn{
\PP(X_{\ell+1} = (\vc{n}^{(\ell)},n_{\ell+1}) \mid X_{\ell} 
= \vc{n}^{(\ell)})
}
\qquad &&
\nonumber
\\
&=& \{ \rho_{\ell}(\vc{n}^{(\ell)}) \}^{n_{\ell+1}}  
 \{ \ol{\rho}_{\ell}(\vc{n}^{(\ell)}) \}^{1 - n_{\ell+1}},
\label{transition-law-RW}
\end{eqnarray}
where $\vc{n}^{(\ell+1)} = (\vc{n}^{(\ell)},n_{\ell+1})$, and where $\rho_{\ell}(\vc{n}^{(\ell)})$ and $\ol{\rho}_{\ell}(\vc{n}^{(\ell)})$ are easily calculated by (\ref{defn-rho_l(n)}) and (\ref{defn-ol{rho}_l(n)}) with $\varpi_{\ell+1}(\vc{n}^{(\ell)},0)$ and $\varpi_{\ell+1}(\vc{n}^{(\ell)},1)$. 

The state space $\bbS$ of the one-way random walk $\{X_{\ell}\}$ is considered a binary tree such that the root is labeled with $\vc{n}^{(0)} = \varnothing$ and each of all the other nodes has a label in binary vector form that consists of its parent label and a binary digit, where ``0" and ``1" corresponds to left and right children, respectively. For example, the left and right children (if any) of node $\vc{n}^{(\ell)}$ are labeled with $(\vc{n}^{(\ell)},0)$ and $(\vc{n}^{(\ell)},1)$, respectively (see Fig.~\ref{fig-FBS}).
\begin{figure*}[htb]
	\centering
	\includegraphics[scale=0.44,bb=0 0 961 308]{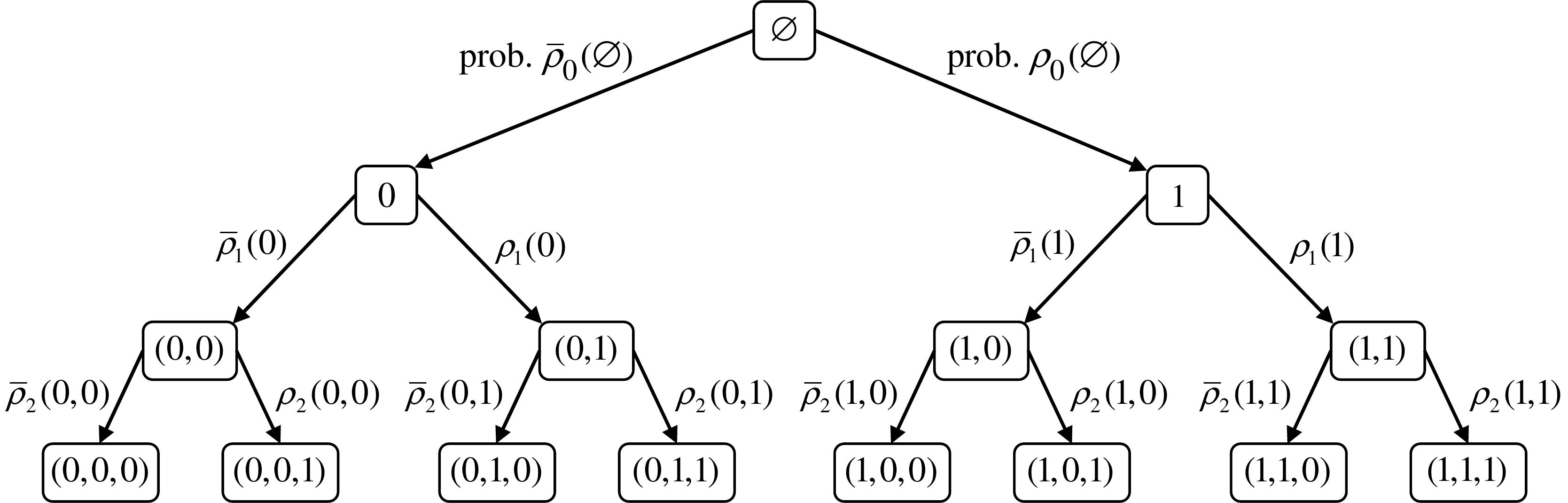}
	
	\caption{One-way random walk on a binary tree for the FBS procedure ($N = 2^3 - 1 = 7$)}
	\label{fig-FBS}
\end{figure*}
In this perspective, the one-way random walk $\{X_{\ell}\}$ starts from the root of the binary tree, moves down according to the transition probabilities $\rho_{\ell}(\vc{n}^{(\ell)})$'s and $\ol{\rho}_{\ell}(\vc{n}^{(\ell)})$'s and ends at one of the leaves. 

From Lemma~\ref{lem-pi(sigma_d(n))}, we have the following result.
\begin{lem}\label{lem-forward}
\begin{equation*}
\PP(X_d = \vc{n}^{(d)}) = \pi(\sigma_d(\vc{n}^{(d)})),\qquad \vc{n}^{(d)} \in \bbB_+^{(d)}.
\end{equation*}
\end{lem}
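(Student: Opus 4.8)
The plan is to unfold the Markov chain's path probability as a telescoping product and then invoke Lemma~\ref{lem-pi(sigma_d(n))}. Fix $\vc{n}^{(d)} \in \bbB_+^{(d)}$ and write $\vc{n}^{(\ell)}$ for its length-$\ell$ prefix, $\ell = 0,1,\dots,d$. By (\ref{eqn-161007-02}), each prefix $\vc{n}^{(\ell)}$ lies in $\bbB_+^{(\ell)}$, so the event $\{X_\ell = \vc{n}^{(\ell)}\}$ is one the chain can actually visit and all the conditional probabilities (\ref{transition-law-RW}) along this prefix chain are well-defined (the denominators $\varpi_\ell(\vc{n}^{(\ell)})$ are strictly positive).

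First I would write the joint probability of the whole trajectory $(X_0,X_1,\dots,X_d) = (\vc{n}^{(0)},\vc{n}^{(1)},\dots,\vc{n}^{(d)})$ as the product of the initial probability (\ref{initial-cond}) and the one-step transition probabilities (\ref{transition-law-RW}):
\begin{equation*}
\PP(X_0 = \vc{n}^{(0)}, \dots, X_d = \vc{n}^{(d)})
= \prod_{\ell=0}^{d-1}
 \{ \rho_{\ell}(\vc{n}^{(\ell)}) \}^{n_{\ell+1}}
 \{ \ol{\rho}_{\ell}(\vc{n}^{(\ell)}) \}^{1 - n_{\ell+1}},
\end{equation*}
using that $\PP(X_0 = \vc{n}^{(0)}) = \PP(X_0 = \varnothing) = 1$. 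Since the prefixes $\vc{n}^{(0)},\dots,\vc{n}^{(d-1)}$ are determined by $\vc{n}^{(d)}$, the event $\{X_d = \vc{n}^{(d)}\}$ forces $X_\ell = \vc{n}^{(\ell)}$ for every $\ell$; hence the marginal $\PP(X_d = \vc{n}^{(d)})$ equals the trajectory probability above. Next I would apply Lemma~\ref{lem-pi(sigma_d(n))}: the right-hand side is exactly $\pi(\sigma_d(\vc{n}^{(d)}))$ by (\ref{eqn-varpi(n)-02}). This completes the proof.

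There is essentially no hard part here — the statement is a direct corollary of Lemma~\ref{lem-pi(sigma_d(n))} once one observes that the path probability along the unique root-to-$\vc{n}^{(d)}$ branch telescopes into the product appearing in that lemma. The only point requiring a small amount of care is the well-definedness of the transition probabilities along the path, which is handled by (\ref{eqn-161007-02}); and one should note that the chain can only reach leaves in $\bbB_+^{(d)}$ (the transition law never assigns positive probability to moving into a node outside $\bbB_+^{(\ell+1)}$, since $\rho_\ell, \ol\rho_\ell$ are built from $\varpi$'s that vanish there), so the displayed formula indeed describes the full distribution of $X_d$, and summing over $\vc{n}^{(d)} \in \bbB_+^{(d)}$ recovers $\sum_i \pi(i) = 1$ as a sanity check.
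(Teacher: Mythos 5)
Your proposal is correct and follows essentially the same route as the paper: express $\PP(X_d = \vc{n}^{(d)})$ via (\ref{initial-cond}) and the product of transition probabilities (\ref{transition-law-RW}) along the unique prefix path, then identify that product with $\pi(\sigma_d(\vc{n}^{(d)}))$ by Lemma~\ref{lem-pi(sigma_d(n))}. The extra remarks on well-definedness via (\ref{eqn-161007-02}) and the marginal-equals-trajectory observation only make explicit what the paper leaves implicit.
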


\begin{proof}
It follows from (\ref{initial-cond}), (\ref{transition-law-RW}) and Lemma~\ref{lem-pi(sigma_d(n))} that, for $\vc{n}^{(d)} \in \bbB_+^{(d)}$,
\begin{eqnarray*}
\PP(X_d = \vc{n}^{(d)})
&=& \prod_{\ell=0}^{d-1}
\{ \rho_{\ell}(\vc{n}^{(\ell)}) \}^{n_{\ell+1}}  
 \{ \ol{\rho}_{\ell}(\vc{n}^{(\ell)}) \}^{1 - n_{\ell+1}}
\nonumber
\\
&=& \pi(\sigma_d(\vc{n}^{(d)})),
\end{eqnarray*}
which completes the proof. 
\end{proof}

Lemma~\ref{lem-forward} implies that the one-way random walk $\{X_{\ell};\ell\in\bbZ_d\}$ generates samples following the target distribution. Indeed, the FBS procedure achieves such sampling by choosing the values of $n_{\ell}$'s, $\ell=1,2,\dots,d$, in
the {\it forward} order, i.e., in the order of $n_1,n_2,\dots,n_d$.
The way of choosing the $n_{\ell}$'s is such that
\begin{eqnarray}
n_{\ell}
=
\left\{
\begin{array}{ll}
1, & \quad \mbox{with prob.~} 
\rho_{\ell-1}( \vc{n}^{(\ell-1)} ),
\\
0, & \quad \mbox{with prob.~} \ol{\rho}_{\ell-1}(\vc{n}^{(\ell-1)}),
\end{array}
\right.
\label{choosing-n_l}
\end{eqnarray}
where $\ol{\rho}_{\ell-1}(\vc{n}^{(\ell-1)}) =1 - \rho_{\ell-1}( \vc{n}^{(\ell-1)} )$ and
\begin{equation}
\rho_{\ell-1}( \vc{n}^{(\ell-1)} ) 
= {\varpi_{\ell}( \vc{n}^{(\ell-1)},1 ) 
\over 
\varpi_{\ell}( \vc{n}^{(\ell-1)}, 0 ) +
\varpi_{\ell}( \vc{n}^{(\ell-1)}, 1 )
}.
\label{eqn-rho_{l-1}(n)}
\end{equation}
The obtained vector $\vc{n}^{(d)} = (n_1,n_2,\dots,n_d)$ is converted to the integer $i_{\ast} = \sigma_d(\vc{n}^{(d)})$, which is a sample from the target distribution. The description of the FBS procedure is summarized in 
Procedure~\ref{proc-FBS}.

\begin{hmproc}[FBS: Forward binary sampling]\label{proc-FBS}
\hfill 

\noindent
{
{\bf Input:}
\begin{minipage}[t]{0.8\textwidth}
$\{\varpi_{\ell}(\vc{n}^{(\ell)});\ell=1,2,\dots,d, \vc{n}^{(\ell)} \in \bbB_+^{(\ell)}\}$
\end{minipage}

\smallskip

\noindent
{\bf Output:} Sample $i_{\ast} \in \bbZ_N$ from $\{\pi(i)\}$

\vspace{-1mm}

\begin{enumerate}
\setlength{\parskip}{0cm}
\setlength{\itemsep}{0cm}
\item For $\ell=1,2,\dots,d$, choose $n_{\ell} \in \bbB$  by (\ref{choosing-n_l}).
\item Return $i_{\ast} = \sigma_d(\vc{n}^{(d)}) \in \bbZ_N$.
\end{enumerate}
}
\end{hmproc}

The following theorem is an immediate consequence of Lemma~\ref{lem-forward} and Procedure~\ref{proc-FBS}. Thus, we omit its proof. 

\begin{thm}
The FBS procedure generates samples following the target distribution $\{\pi(i);i\in\bbZ_N\}$. 
\end{thm}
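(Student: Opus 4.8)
The plan is to show that the output $i_\ast$ of Procedure~\ref{proc-FBS} is distributed as $\{\pi(i);i\in\bbZ_N\}$ by identifying the sequence of choices in step~1 with a realization of the one-way random walk $\{X_\ell;\ell\in\bbZ_d\}$ and then invoking Lemma~\ref{lem-forward}. First I would observe that the rule (\ref{choosing-n_l}) for choosing $n_\ell$, together with the identification (\ref{eqn-rho_{l-1}(n)}) of $\rho_{\ell-1}(\vc{n}^{(\ell-1)})$ with the ratio of $\varpi$'s, is exactly the conditional law (\ref{transition-law-RW}) of the random walk: conditionally on the partial vector $\vc{n}^{(\ell-1)}$ already constructed (which plays the role of $X_{\ell-1}$), the next coordinate $n_\ell$ equals $1$ with probability $\rho_{\ell-1}(\vc{n}^{(\ell-1)})$ and $0$ with probability $\ol{\rho}_{\ell-1}(\vc{n}^{(\ell-1)})$, so that $\vc{n}^{(\ell)}=(\vc{n}^{(\ell-1)},n_\ell)$ has the distribution of $X_\ell$ given $X_{\ell-1}=\vc{n}^{(\ell-1)}$. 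Since step~1 starts from the empty vector $\vc{n}^{(0)}=\varnothing$, which matches the initial condition (\ref{initial-cond}) that $X_0=\varnothing$ almost surely, the vector $\vc{n}^{(d)}$ produced after $d$ iterations is equal in distribution to $X_d$.

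Next I would combine this with Lemma~\ref{lem-forward}, which gives $\PP(X_d=\vc{n}^{(d)})=\pi(\sigma_d(\vc{n}^{(d)}))$ for every $\vc{n}^{(d)}\in\bbB_+^{(d)}$. Because $\bbB_+^{(d)}$ is precisely the set of binary vectors $\vc{n}^{(d)}$ with $\sigma_d(\vc{n}^{(d)})\in\bbZ_N$ (see (\ref{defn-B_+^{(d)}})), and since $\sigma_d$ restricted to $\bbB_+^{(d)}$ is a bijection onto $\bbZ_N$, for any $i\in\bbZ_N$ there is a unique $\vc{n}^{(d)}\in\bbB_+^{(d)}$ with $\sigma_d(\vc{n}^{(d)})=i$, whence
\[
\PP(i_\ast = i)
= \PP(\sigma_d(\vc{n}^{(d)}) = i)
= \PP(X_d = \vc{n}^{(d)})
= \pi(i).
\]
This shows that each generated sample $i_\ast$ follows $\{\pi(i);i\in\bbZ_N\}$, and since successive runs of step~1 use independent randomness, the generated samples are i.i.d.\ with the target law.

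One technical point worth spelling out is that the construction never leaves $\bbS$: if $\vc{n}^{(\ell)}\in\bbB_+^{(\ell)}$ then at least one of $\varpi_{\ell+1}(\vc{n}^{(\ell)},0)$, $\varpi_{\ell+1}(\vc{n}^{(\ell)},1)$ is strictly positive (by (\ref{eqn-varpi_{ell}(n^{(ell)})}) and (\ref{cond-pi})), so the denominator in (\ref{eqn-rho_{l-1}(n)}) is positive and $\rho_{\ell-1}$ is well defined; moreover the branch chosen by (\ref{choosing-n_l}) always leads to a node with positive $\varpi$, hence $\vc{n}^{(\ell+1)}\in\bbB_+^{(\ell+1)}$, by the same reasoning as in (\ref{eqn-161007-02}). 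I do not expect any real obstacle here — the only thing to be careful about is making the correspondence between ``the random digits chosen in step~1'' and ``a trajectory of $\{X_\ell\}$'' fully explicit, so that Lemma~\ref{lem-forward} applies verbatim; the rest is the observation that $\sigma_d$ is the inverse relabeling that sends $X_d$ back to the integer support $\bbZ_N$. In fact, since the statement is flagged in the excerpt as ``an immediate consequence of Lemma~\ref{lem-forward} and Procedure~\ref{proc-FBS},'' the intended proof is essentially this one-line identification, and the paragraphs above merely unpack it.
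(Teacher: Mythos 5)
Your proposal is correct and coincides with the paper's intended argument: the paper omits the proof precisely because the theorem is an immediate consequence of Lemma~\ref{lem-forward} and Procedure~\ref{proc-FBS}, which is exactly the identification you spell out (FBS choices via (\ref{choosing-n_l})--(\ref{eqn-rho_{l-1}(n)}) realize the one-way random walk, and $\sigma_d$ restricted to $\bbB_+^{(d)}$ bijects onto $\bbZ_N$). Your added remarks on well-definedness of $\rho_{\ell-1}$ and the walk staying in $\bbB_+^{(\ell)}$ are consistent with what the paper already establishes via (\ref{eqn-161007-02}) in Lemma~\ref{lem-pi(sigma_d(n))}.
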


The FBS procedure uses the one-way random walk $\{X_{\ell};\ell\in\bbZ_d\}$, which is constructed by the BBS procedure. It is remarkable that the BBS procedure not only constructs this random walk but also draws a sample from the target distribution $\{\pi(i)\}$. More specifically, the BBS procedure draws a binary vector $(n_1,n_2\dots,n_d)$ from $\bbB_+^{(d)}$ by choosing the values of the $n_{\ell}$'s by (\ref{choosing-n_l}) in the {\it backward order}, i.e., the order of $n_d,n_{d-1},\dots,n_1$. The description of the BBS procedure is summarized in Procedure~\ref{proc-BBS}. In addition, Fig.~\ref{fig-BBS} provides an example of the behavior of the BBS procedure.
\begin{hmproc}[BBS: Backward binary sampling]\label{proc-BBS}
\hfill \smallskip

\noindent
{
{\bf Input:}
Target distribution $\{\pi(i);i\in\bbZ_N\}$

\smallskip

\noindent
{\bf Output:} 
\begin{minipage}[t]{0.7\textwidth}
Sample $i_{\ast} \in \bbZ_N$ from $\{\pi(i);i\in\bbZ_N\}$ and \\
$\{\varpi_{\ell}(\vc{n}^{(\ell)});\ell=1,2,\dots,d, \vc{n}^{(\ell)} \in \bbB_+^{(\ell)}\}$
\end{minipage}

\vspace{-1mm}

\begin{enumerate}
\setlength{\parskip}{0cm}
\setlength{\itemsep}{0cm}
\item Set $\bbA = \bbB_+^{(d)}$.
\item For $\ell=d,d-1,\dots,1$, execute the following iteration.
\begin{description}
\setlength{\parskip}{0cm}
\setlength{\itemsep}{0cm}
\item[{\bf Iteration}:] For each $\vc{n}^{(\ell-1)} \in \bbB_+^{(\ell-1)}$, perform Steps (a)--(c):
\end{description}
\vspace{-2mm}
\begin{enumerate}
\setlength{\parskip}{0cm}
\setlength{\itemsep}{0cm}
\item If $\ell=d$, store the probabilities:
\[
\varpi_d(\vc{n}^{(d-1)},n_d)
= \pi(\sigma(\vc{n}^{(d-1)},n_d)),\quad n_d \in \bbB;
\]
otherwise (i.e., if $\ell \le d-1$) compute the probabilities $\varpi_{\ell}(\vc{n}^{(\ell-1)},n_{\ell})$'s $n_{\ell} \in \bbB$, by (\ref{defn-pi_i(n)}) and store the results;
\item choose the value of $n_{\ell}$ by (\ref{choosing-n_l}); and
\item delete the vector $( \vc{n}^{(\ell-1)},1-n_{\ell},n_{\ell+1},\dots,n_d)$ from~$\bbA$.
\end{enumerate}
\item Return $i_{\ast} = \sigma_d(\vc{n}^{(d)}) \in \bbZ_N$ with the (unique) element $\vc{n}^{(d)}$ of $\bbA$.
\end{enumerate}
}
\end{hmproc}

\medskip

\begin{figure*}[htb]
	\centering
	\includegraphics[scale=0.4,bb=0 0 897 421]{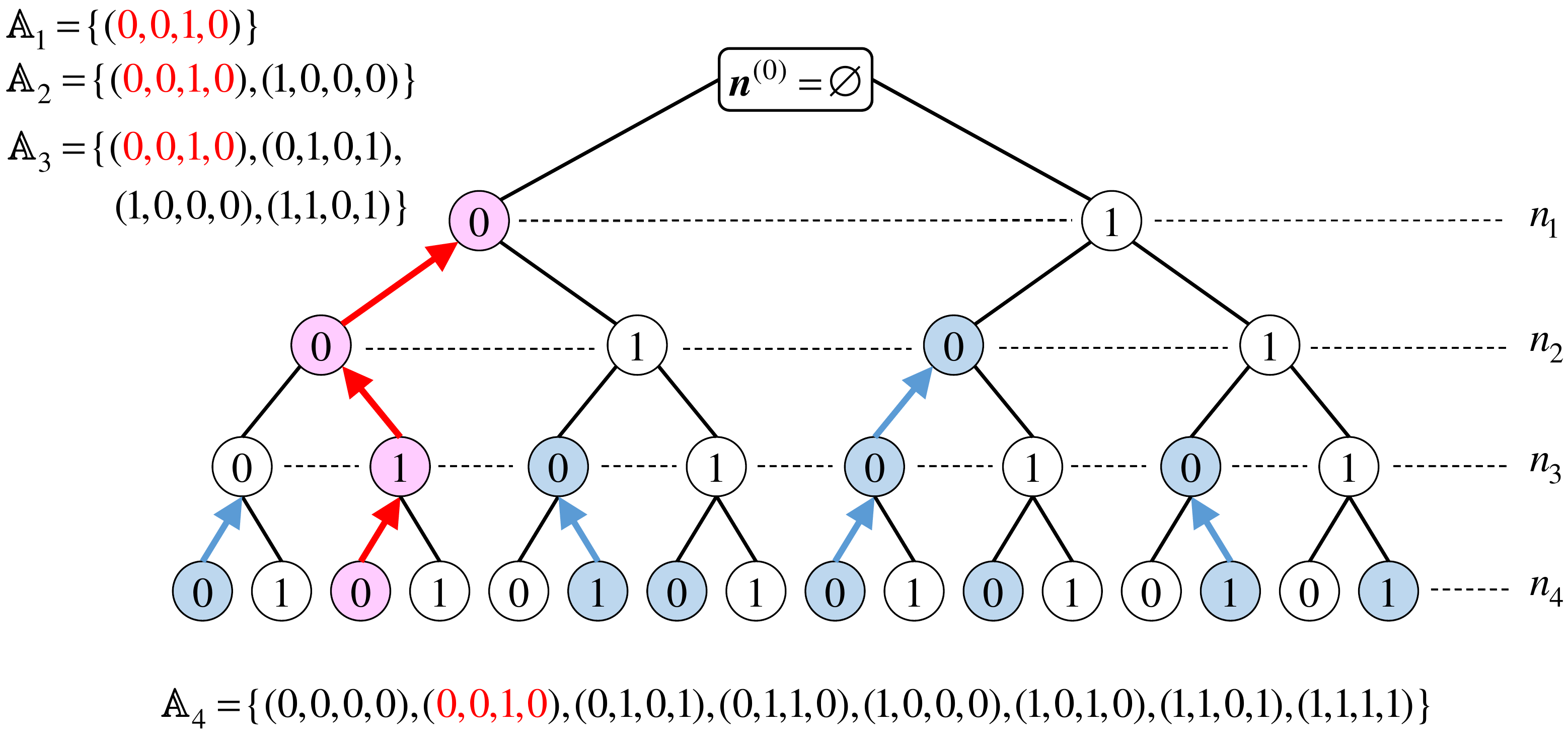}
	\caption{Example of the behavior of the BBS procedure ($N = 2^4 - 1=15$)}
	\label{fig-BBS}
\end{figure*}

The following theorem guarantees that the BBS procedure (i.e., Procedure~\ref{proc-BBS}) works well. The proof of this theorem is given in Appendix.
\begin{thm}\label{thm-BBS}
Steps (i) and (ii) of the BBS procedure result in set $\bbA$ consisting of only one element. Furthermore, Step (iii) of the BBS procedure returns $i_{\ast} \in \bbZ_N$ with probability $\pi(i_{\ast})$.
\end{thm}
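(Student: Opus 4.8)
The plan is to analyze the BBS procedure as a backward sweep over the levels $\ell = d, d-1, \dots, 1$ and to track two things simultaneously: first, the evolution of the set $\bbA$; second, the joint law of the binary digits $n_d, n_{d-1}, \dots, n_1$ chosen in Step (b) of each iteration. For the structural claim, I would prove by downward induction on $\ell$ the invariant that, after the iteration for level $\ell$ has completed, $\bbA$ equals $\{(n_1, \dots, n_d) \in \bbB_+^{(d)} : n_\ell, n_{\ell+1}, \dots, n_d \text{ take their chosen values}\}$; equivalently, $\bbA$ consists exactly of those vectors in $\bbB_+^{(d)}$ whose last $d - \ell + 1$ coordinates agree with the digits fixed so far. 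The base case is the initialization $\bbA = \bbB_+^{(d)}$ before any iteration. For the inductive step, note that at level $\ell$ the procedure ranges over every $\vc{n}^{(\ell-1)} \in \bbB_+^{(\ell-1)}$, and for each such prefix deletes the vector $(\vc{n}^{(\ell-1)}, 1-n_\ell, n_{\ell+1}, \dots, n_d)$; since by (\ref{defn-B_+^{(l)}}) every element of $\bbA$ has a prefix $\vc{n}^{(\ell-1)} \in \bbB_+^{(\ell-1)}$, these deletions remove precisely the vectors whose $\ell$-th coordinate disagrees with the chosen $n_\ell$, which establishes the invariant. Taking $\ell = 1$, the invariant says $\bbA$ consists of all vectors in $\bbB_+^{(d)}$ agreeing with the fully specified $\vc{n}^{(d)} = (n_1, \dots, n_d)$, i.e.\ $\bbA = \{\vc{n}^{(d)}\}$ is a singleton — provided that vector lies in $\bbB_+^{(d)}$. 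I would therefore also carry along, as part of the induction, the fact that the partial vector $(n_\ell, \dots, n_d)$ is always \emph{completable} to an element of $\bbB_+^{(d)}$; this follows because Step (b) chooses $n_\ell$ with probabilities $\rho_{\ell-1}(\vc{n}^{(\ell-1)})$ and $\ol{\rho}_{\ell-1}(\vc{n}^{(\ell-1)})$, and by (\ref{eqn-161007-02}) these are well-defined and the completable completions carry all the mass.

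For the probabilistic claim, I would compute $\PP(\vc{n}^{(d)} = (m_1, \dots, m_d))$ for a fixed $(m_1, \dots, m_d) \in \bbB_+^{(d)}$ by multiplying the conditional probabilities in the order the digits are actually drawn, namely $m_d$ first, then $m_{d-1}$, and so on. The key observation is that the choice of $n_\ell$ in Step (b) depends only on the prefix $\vc{n}^{(\ell-1)}$ (through $\varpi_\ell(\vc{n}^{(\ell-1)},0)$ and $\varpi_\ell(\vc{n}^{(\ell-1)},1)$, both available after Step (a)) and \emph{not} on the digits $n_{\ell+1}, \dots, n_d$ already fixed. Hence, although the BBS procedure visits the levels in the backward order, the conditional distribution of $n_\ell$ given everything drawn so far is exactly (\ref{choosing-n_l}) with the \emph{actual} prefix $\vc{n}^{(\ell-1)}$, and therefore
\begin{equation*}
\PP\bigl(\vc{n}^{(d)} = (m_1,\dots,m_d)\bigr)
= \prod_{\ell=0}^{d-1} \{\rho_\ell(\vc{m}^{(\ell)})\}^{m_{\ell+1}} \{\ol{\rho}_\ell(\vc{m}^{(\ell)})\}^{1-m_{\ell+1}},
\end{equation*}
which equals $\pi(\sigma_d(\vc{m}^{(d)}))$ by Lemma~\ref{lem-pi(sigma_d(n))}. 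Since Step (iii) returns $i_\ast = \sigma_d(\vc{n}^{(d)})$, the probability of returning any given $i_\ast \in \bbZ_N$ is $\pi(i_\ast)$, as claimed.

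The main obstacle, and the point requiring care, is making rigorous the claim that the backward-order drawing yields the \emph{same} product as the forward-order expression of Lemma~\ref{lem-pi(sigma_d(n))} — i.e.\ that reordering the sequence of conditional choices does not change the joint law. The cleanest way to handle this is to exhibit an explicit probability space: let $\{U_\ell\}_{\ell=1}^d$ be i.i.d.\ uniform$(0,1)$ random variables, declare that iteration $\ell$ uses $U_\ell$ to set $n_\ell = \mathbf{1}\{U_\ell \le \rho_{\ell-1}(\vc{n}^{(\ell-1)})\}$, and observe that since each $n_\ell$ is a deterministic function of $U_\ell$ and the prefix $\vc{n}^{(\ell-1)} = (n_1, \dots, n_{\ell-1})$ — which in turn depends only on $U_1, \dots, U_{\ell-1}$ — the entire vector $(n_1, \dots, n_d)$ is well-defined regardless of the order in which the $U_\ell$ are consumed. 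With this coupling the joint law is manifestly the product above, and the backward versus forward distinction dissolves. A secondary subtlety worth a sentence is the well-definedness of $\rho_{\ell-1}(\vc{n}^{(\ell-1)})$ at each step — i.e.\ that the denominator $\varpi_\ell(\vc{n}^{(\ell-1)},0) + \varpi_\ell(\vc{n}^{(\ell-1)},1) = \varpi_{\ell-1}(\vc{n}^{(\ell-1)})$ is strictly positive — which is exactly the completability fact carried along in the induction above and follows from (\ref{eqn-161007-02}) together with (\ref{cond-pi}).
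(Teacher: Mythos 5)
Your proposal is correct, and its core coincides with the paper's own argument: both proofs reduce the distributional claim to the telescoping product of Lemma~\ref{lem-pi(sigma_d(n))}, using only that coin tosses of different iterations are independent while the choice made at iteration $\ell$ for a fixed prefix $\vc{m}^{(\ell-1)}$ equals $m_{\ell}$ with probability $\{\rho_{\ell-1}(\vc{m}^{(\ell-1)})\}^{m_{\ell}}\{\ol{\rho}_{\ell-1}(\vc{m}^{(\ell-1)})\}^{1-m_{\ell}}$; the paper records this as the survival probability of a fixed vector in $\bbA_{\ell}$ (its equation (\ref{eqn-170216-01})) and sets $\ell=1$, whereas you re-derive it via an explicit coupling with the forward procedure. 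The differences are presentational but worth noting. For the singleton claim the paper dispenses with your invariant and merely counts: writing $\bbA_{d+1}=\bbB_+^{(d)}$ and $\bbA_j$ for the set after the iteration with $\ell=j$, Step (ii) gives $\card(\bbA_j)=\lceil\card(\bbA_{j+1})/2\rceil$, hence $1\le\card(\bbA_j)\le 2^{j-1}$ and $\card(\bbA_1)=1$. Your induction proves more (it identifies the survivor, which you then reuse for the probability computation), but it must be read per prefix: at an intermediate level $\ell$ there is no single tuple of ``chosen'' digits $n_{\ell},\dots,n_d$, since a digit at level $j\ge\ell$ is drawn separately for every prefix in $\bbB_+^{(j-1)}$; the accurate invariant is that $\bbA$ contains exactly one survivor for each prefix in $\bbB_+^{(\ell-1)}$, its suffix being the digits chosen along that prefix's branch, and only at $\ell=1$ does this collapse to a single vector. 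Finally, your coupling with one uniform $U_{\ell}$ per iteration is a legitimate formalization (it makes all within-iteration choices share a coin, which is harmless --- the paper's remark following Theorem~\ref{thm-BBS} observes that only between-iteration independence is needed), though the shortest rigorous route, and essentially what the paper does, is to note that for fixed $\vc{m}^{(d)}\in\bbB_+^{(d)}$ the event that the output equals $\vc{m}^{(d)}$ is the intersection over $\ell$ of events each concerning the fixed, non-random prefix $\vc{m}^{(\ell-1)}$, so the probability factorizes across iterations with the probabilities in (\ref{choosing-n_l}).
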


\begin{rem}
In each iteration of Step (ii), the BBS procedure selects, by {\it coin toss (appropriately biased in each selection)},
the candidates of the binary expression of a possible sample from the target distribution (see Fig.~\ref{fig-BBS}).
Theorem~\ref{thm-BBS} implies that a desired result is what goes through the whole process of selections whatever it is. Therefore, the coin tosses within one iteration need not be independent, though those between different iterations must be independent.
\end{rem}

\begin{rem}\label{rem-backward-BS-02}
The BBS procedure works well even though the target distribution $\{\pi(i);i\in\bbZ_N\}$ is not normalized, that is, $\pi(i)$ is expressed as
\begin{equation}
\pi(i) = \kappa \wt{\pi}(i),\qquad i \in \bbZ_N,
\label{eqn-pi(i)-unnormalized}
\end{equation}
where $\kappa$ is an unknown positive constant and $\{\wt{\pi}(i);i\in\bbZ_N\}$ is a given sequence of positive numbers. In such a case, we define
\begin{equation*}
\wt{\varpi}_d(\vc{n}^{(d)}) 
= \wt{\pi}( \sigma_d(  \vc{n}^{(d)} ) ), 
\qquad \vc{n}^{(d)} \in \bbB_+^{(d)},
\end{equation*}
and compute, for $\ell\in\bbZ_{d-1}$ and $\vc{n}^{(\ell)} \in \bbB_+^{(\ell)}$,
\begin{eqnarray}
\wt{\varpi}_{\ell}(\vc{n}^{(\ell)}) 
&=& 
\sum_{ n_{\ell+1} \in\bbB  } 
\sum_{ n_{\ell+2} \in\bbB  } 
\cdots 
\sum_{ n_d \in\bbB  } 
\wt{\pi}( \sigma_d(  \vc{n}^{(d)} ) ), \qquad
\label{eqn-wt{varpi}_l(n)}
\end{eqnarray}
by the recursion (\ref{defn-pi_d(n)}) and (\ref{defn-pi_i(n)}) with the $\varpi_{\ell}(\vc{n}^{(\ell)})$'s replaced by the $\wt{\varpi}_{\ell}(\vc{n}^{(\ell)})$'s. Note that 
\begin{equation}
\wt{\varpi}_{\ell}(\vc{n}^{(\ell)}) = \varpi_{\ell}(\vc{n}^{(\ell)})/\kappa,
~~~ \ell\in\bbZ_{d-1},\,\vc{n}^{(\ell)} \in \bbB_+^{(\ell)},
\label{eqn-170416-01}
\end{equation}
which follows from (\ref{eqn-varpi_{ell}(n^{(ell)})}), (\ref{eqn-pi(i)-unnormalized}) and (\ref{eqn-wt{varpi}_l(n)}). We then calculate, for $\ell\in\bbZ_{d-1}$ and $\vc{n}^{(\ell)} \in \bbB_+^{(j)}$, 
\begin{eqnarray}
\wt{\rho}_{\ell}(\vc{n}^{(\ell)}) 
&:=& 
{\wt{\varpi}_{\ell+1}(\vc{n}^{(\ell)},1) 
\over \wt{\varpi}_{\ell}(\vc{n}^{(\ell)})
}
\nonumber
\\
&=&
{ \wt{\varpi}_{\ell+1}(\vc{n}^{(\ell)},1) 
\over \wt{\varpi}_{\ell+1}(\vc{n}^{(\ell)},0) 
+ \wt{\varpi}_{\ell+1}(\vc{n}^{(\ell)},1)
}.
\label{defn-wt{rho}_l(n)}
\end{eqnarray}
Equations (\ref{defn-rho_l(n)}), (\ref{eqn-170416-01}) and (\ref{defn-wt{rho}_l(n)}) show that $\wt{\rho}_{\ell}(\vc{n}^{(\ell)}) = \rho_{\ell}(\vc{n}^{(\ell)})$ for all $\ell\in\bbZ_{d-1}$ and $\vc{n}^{(\ell)} \in \bbB_+^{(\ell)}$. 
\end{rem}

We are now ready to describe our BS algorithm, which
is summarized in Algorithm~\ref{algo-BS}.

\begin{hmalgo}[BS: Binary sampling]\label{algo-BS}
\hfill \smallskip

\noindent
{
{\bf Input:} Target distribution $\{\pi(i);i\in\bbZ_N\}$\\
{\bf Output:} Samples from $\{\pi(i);i\in\bbZ_N\}$

\vspace{-1mm}

\begin{enumerate}
\setlength{\parskip}{0cm}
\setlength{\itemsep}{0cm}
\item Perform the BBS procedure (Procedure~\ref{proc-BBS}) once, which generates a sample.
\item Repeat the FBS procedure (Procedure~\ref{proc-FBS}) as many times as necessary.
\item Return the generated samples.
\end{enumerate}
}
\end{hmalgo}

\medskip

In the rest of this section, we discuss the performance of Algorithm~\ref{algo-BS}. Clearly, the time complexity of the BBS procedure is dominated by Step (ii) of Procedure~\ref{proc-BBS}, where the probabilities
\begin{equation}
\{\varpi_{\ell}(\vc{n}^{(\ell)});\ell=1,2,\dots,d,\vc{n}^{(\ell)} \in \bbB_+^{(\ell)} \}
\label{eqn-prob-varpi_l}
\end{equation}
are computed (if necessarily) and stored. The total number of these probabilities is $O(N)$. Thus, the time and space complexities of the BBS procedure are $O(N)$. It should be noted that the probabilities $\{\varpi_{\ell}(\vc{n}^{(\ell)}) \}$ in (\ref{eqn-prob-varpi_l}) are computed by pairwise summation. Therefore, the computation of theses probabilities is parallelizable, and the results include only $O(\ln N)$ relative rounding error (see, e.g., \citealt{High93}).
On the other hand, the FBS procedure determines the values of $n_1,n_2,\dots,n_d$ by running the one-way random walk $\{X_{\ell};\ell\in\bbZ_d\}$. Thus, the FBS procedure has time complexity of
\[
O(d) = O(\log_2 N) = O(\ln N),
\]
where the first equality follows from (\ref{defn-d-02}). The FBS procedure also has $O(N)$ space complexity for the probabilities $\{\varpi_{\ell}(\vc{n}^{(\ell)})\}$ in (\ref{eqn-prob-varpi_l}) and the values of $n_1,n_2,\dots,n_d$. 

As a result, the BS algorithm has $O(N)$ time and space complexities, though this algorithm generates the first sample in $O(N)$ time and the second and subsequent samples in $O(\ln N)$ time. The obtained samples are influenced by $O(\ln N)$ relative rounding error. Finally, Table~\ref{table-BS} summarizes the performance of the BS algorithm.

\begin{table}[htb]
\centering
\caption{Performance of BS algorithm}\label{table-BS}
\begin{tabular}{|r||c|c|}
\hline
				 & \multicolumn{2}{c|}{BS}	
\\
\cline{2-3}
				 &	BBS   & FBS
\\
\hline
\hline
Time complexity  &  $O(N)$ & $O(\ln N)$
\\
\hline
Space complexity &  \multicolumn{2}{c|}{$O(N)$}
\\
\hline
Relative rounding error   &  \multicolumn{2}{c|}{$O(\ln N)$} \\ 
\hline
\end{tabular}
\end{table}

\section{Comparison with ITS algorithms}\label{sec-discussion}

In this section, we compare our BS algorithm with two ITS algorithms: (a) the naive ITS algorithm; and (b) the standard binary-search ITS algorithm. To this end, we define $\{\ol{\pi}(i);i\in\bbZ_N\}$ as the cumulative distribution function of the target distribution $\{\pi(i);i\in\bbZ_N\}$, i.e.,
\[
\ol{\pi}(i) = \sum_{k=0}^i \pi(k), \qquad i \in \bbZ_N.
\]
We then assume that no explicit expressions of the cumulative distribution function $\{\ol{\pi}(i)\}$ are given, which implies that we have to compute $\{\ol{\pi}(i)\}$ or its equivalent, in order to perform the ITS method.

\subsection{Naive ITS}

We begin with the description of the naive ITS algorithm, which is summarized in Algorithm~\ref{algo-ITS}. 

\begin{hmalgo}[Naive ITS]\label{algo-ITS}
\hfill \smallskip

\noindent
{
{\bf Input:}  Target distribution $\{\pi(i);i\in\bbZ_N\}$\\
{\bf Output:} Sample $i_{\ast}$ from $\{\pi(i);i\in\bbZ_N\}$

\vspace{-1mm}

\begin{enumerate}
\setlength{\parskip}{0cm}
\setlength{\itemsep}{0cm}
\item Set $\ol{\pi}(0) = \pi(0)$, and, for $k=1,2,\dots,N$, compute and store $\ol{\pi}(k) = \ol{\pi}(k-1) + \pi(k)$.
\item Generate a uniform random number $u$ in $(0,1)$.
\item Return $i_{\ast} \in \bbZ_N$ such that $\ol{\pi}(i_{\ast}-1) < u \le \ol{\pi}(i_{\ast})$, where $\ol{\pi}(-1) = 0$.
\end{enumerate}
}
\end{hmalgo}

\medskip

Step (i) of Algorithm~\ref{algo-ITS} is the preprocessing step of the naive ITS algorithm, which has $O(N)$ time and space complexities, and produces $O(N)$ relative rounding error. 
Step (iii) of Algorithm~\ref{algo-ITS} is the main processing of the naive ITS algorithm, which is equivalent to identifying $i_{\ast} \in \bbZ_N$ such that
\begin{equation}
i_{\ast} = \max\{i\in\bbZ_N: u \le \ol{\pi}(i)\}.
\label{eqn-i_*}
\end{equation}
Therefore, the average time complexity of the main processing, denoted by $C_{\rm ITS}$, is given by
\begin{equation}
C_{\rm ITS}
= \sum_{i=0}^N (i+1) \pi(i)
= 1 + \mu(N),
\label{eqn-C_ITS}
\end{equation}
where $\mu(N) =\sum_{i=1}^N i \pi(i)$. By definition, $0 \le \mu(N) \le N$ and thus $1 \le C_{\rm ITS} \le N+1$.
Indeed, Examples~\ref{examp-artificial}--\ref{examp-binomial} below show that $C_{\rm ITS}$ ranges from $O(1)$ to $O(N)$.

\begin{examp}\label{examp-artificial}
Suppose that
\[
\pi(i) = 
\left\{
\begin{array}{ll}
1 - \dm{2\varepsilon \over N+1}, & \quad i = 0,
\\
\rule{0mm}{7mm}\dm{2\varepsilon \over N(N+1)}, & \quad i=1,2,\dots,N,
\end{array}
\right.
\]
with $0 < \varepsilon <(N+1)/2$. In this artificial case, $\mu(N) = \varepsilon$ and thus
\[
C_{\rm ITS} = 1 + \varepsilon = O(1).
\]
Furthermore, $C_{\rm ITS} \downarrow 1$ as $\varepsilon \downarrow 0$.
\end{examp}

\begin{examp}\label{examp-Zipf}
Suppose that $\{\pi(i);i\in\bbZ_N\}$ is a Zipf distribution with index $s > 2$, i.e.,
\[
\pi(i) = {(i+1)^{-s} \over \sum_{k=0}^N (k+1)^{-s}}, \qquad i \in \bbZ_N.
\]
We then have
\[
C_{\rm ITS}
= {\sum_{i=1}^N (i+1)^{-s+1} \over \sum_{k=0}^N (k+1)^{-s}},
\]
which leads to
\[
\lim_{N\to\infty}C_{\rm ITS} 
= {\zeta(s-1) \over \zeta(s)},
\]
where $\zeta(\cdot)$ is the Riemann zeta function. Therefore, $C_{\rm ITS} = O(1)$.
\end{examp}

\begin{examp}\label{examp-binomial}
Suppose that $\{\pi(i);i\in\bbZ_N\}$ is a binomial distribution with parameter $\gamma \in (0,1)$, i.e.,
\[
\pi(i) = {N! \over i!(N-i!)} \gamma^i (1 - \gamma)^{N-i}, \qquad i \in \bbZ_N.
\]
We then have $\mu(N) = \gamma N$ and thus $C_{\rm ITS} = O(N)$.
\end{examp}

Based on the above discussion, the performance of the naive ITS algorithm is summarized in Table~\ref{table-ITS}.
\begin{table*}[htb]
\centering
\caption{Performance of the native ITS algorithm}\label{table-ITS}
\begin{tabular}{|r||c|}
\hline
\down{2.5mm}{Average time complexity} &  $\mu(N)+1$ \\
& $O(1)$ at best; $O(N)$  at worst
\\
\hline
Space complexity &  $O(N)$
\\
\hline
Relative rounding error   & $O(N)$     \\
\hline
\end{tabular}
\end{table*}
Tables~\ref{table-BS} and \ref{table-ITS} show that our BS algorithm has space complexity of the same order as that of the naive ITS algorithm.
The tables also show that
our BS algorithm
is much more accurate than the naive ITS algorithm in terms of relative rounding error in cumulating the target distribution. 
As for the efficiency of generating samples, the BBS procedure of our BS algorithm generates a sample whereas its counterpart of the naive ITS algorithm (i.e., Step (i) of Algorithm~\ref{algo-ITS}) does not. In addition, the FBS procedure of the BS algorithm generates the second and subsequent samples in $O(\ln N)$ time. Therefore, the BS algorithm generally achieves high performance. On the other hand, the naive ITS algorithm can achieve extremely high performance in some cases, such as Example~\ref{examp-artificial}. 

Examples~\ref{examp-artificial}--\ref{examp-binomial} imply that {\it nonincreasing} $\{\pi(i)\}$ is basically convenient for the naive ITS algorithm. 
We now consider the suitability of the naive ITS algorithm for {\it nondecreasing} target distributions. For this purpose, we suppose that
\[
\pi(i) = {(N-i+1)^{-s} \over \sum_{k=0}^N (k+1)^{-s}}, \qquad i \in \bbZ_N,
\]
where $\{\pi(i)\}$ is nondecreasing. In this case,
\begin{equation}
\mu(N) 
= N + 1 - {\sum_{i=1}^N (i+1)^{-s+1} \over \sum_{k=0}^N (k+1)^{-s}}.
\label{eqn-mu(N)-Zipf-conversely}
\end{equation}
Substituting (\ref{eqn-mu(N)-Zipf-conversely}) into (\ref{eqn-C_ITS}) yields $C_{\rm ITS} = O(\mu(N)) = O(N)$. Thus, it may seem that nondecreasing $\{\pi(i)\}$ is inconvenient for the naive ITS algorithm. In fact, this is not necessarily the case. It should be noted that (\ref{eqn-i_*}) is equivalent to
\begin{equation}
i_{\ast} = \min\{i\in\bbZ_N: u > \ol{\pi}(i-1)\}.
\label{eqn-i_*-02}
\end{equation}
Using (\ref{eqn-i_*-02}), we can perform Step (iii) of Algorithm~\ref{algo-ITS}, whose time complexity is given by
\begin{equation}
\wt{C}_{\rm ITS}
:= \sum_{i=0}^N (N+1-i) \pi(i)
= N + 1 - \mu(N).
\label{eqn-wt{C}_ITS}
\end{equation}
From (\ref{eqn-mu(N)-Zipf-conversely}) and (\ref{eqn-wt{C}_ITS}), we have $\wt{C}_{\rm ITS} = O(1)$.

Consequently, the naive ITS algorithm is expected to achieve high performance for monotone target distributions. Of course, the target distribution $\{\pi(i)\}$ is not in general monotone. In such a general case, we can sort the target distribution $\{\pi(i)\}$ by an appropriate sorting algorithm, e.g., {\it heap sort}, though this preprocessing takes $O(N\ln N)$ time. Note that, in $O(N\ln N)$ time, our BS algorithm generates $O(N)$ samples because the time complexities of the BBS and FBS procedures are $O(N)$ and $O(\ln N)$, respectively (see Table~\ref{table-BS}). Thus, the combination of the naive ITS algorithm and sorting is not competitive to our BS algorithm.

\subsection{Binary-search ITS}

Instead of sorting, there is a technique that reduces the running time of generating a sample by the ITS method; more specifically, that efficiently performs mapping a uniform random number to an element of the support set of the target distribution.  As mentioned in the introduction, such an efficient mapping is achieved by binary search. We refer to the combination of the ITS method and binary search as the binary-search ITS method. This binary-search ITS method is realized as some algorithms depending on what type of binary tree is constructed for the procedure of mapping. The standard construction of such binary trees is described in Procedure~\ref{proc-binary-tree}.

\begin{hmproc}{\bf (Standard construction of a binary tree for binary-search ITS)}\label{proc-binary-tree}
\hfill \smallskip

\noindent
{
{\bf Input:} Target distribution $\{\pi(i);i\in\bbZ_N\}$\\
{\bf Output:} Complete binary tree $\calT$

\smallskip

\noindent
Construct a complete binary tree from the $(2n+1)$ labels in $\bbZ_{2N}$ such that

\vspace{-1mm}

\begin{enumerate}
\setlength{\parskip}{0cm}
\setlength{\itemsep}{0cm}
\item the root of this tree is labeled with zero;
\item for $k=1,2,\dots,N-1$, an internal node (not the root or a leaf) with label $k$ has a parent with label $\lfloor (k-1)/2 \rfloor$, and has children with labels $2k+1$ and $2k+2$, where a left child has a smaller label than its paired right child;
\item for $i \in \bbZ_N$, the probability $\pi(i)$ is assigned to node with label $N+i$; and
\item each of all the nodes, except the leaves, stores the sum of the probabilities assigned to the leaves visited before the present node in the inorder traversal.
\end{enumerate}
%
}
\end{hmproc}

It should be noted that, although the size of the target distribution $\{\pi(i);i\in\bbZ_N\}$ is equal to $N+1$, Procedure~\ref{proc-binary-tree} constructs a complete binary tree with $2N+1$ nodes. The last $N+1$ nodes (which are all leaves) correspond to the elements of the support set $\bbZ_N$ of the target distribution $\{\pi(i);i\in\bbZ_N\}$, and, for each $i\in\bbZ_N$, node $N+i$ (node with label $N+i$) stores the probability $\pi(i)$. On the other hand, the first $N$ nodes are the root and internal nodes, and each of them stores the sum of the probabilities $\pi(i)$'s retrieved from the nodes visited according to the inorder traversal. Fig.~\ref{fig-binary_tree} provides a simple example of complete binary trees for the binary-search ITS method, where the visiting order of the nodes is $\{7, 3, 8, 1, 4, 0, 5, 2, 6\}$.

\begin{figure*}[htb]
	\centering
	\includegraphics[scale=0.4,bb=0 0 471 284]{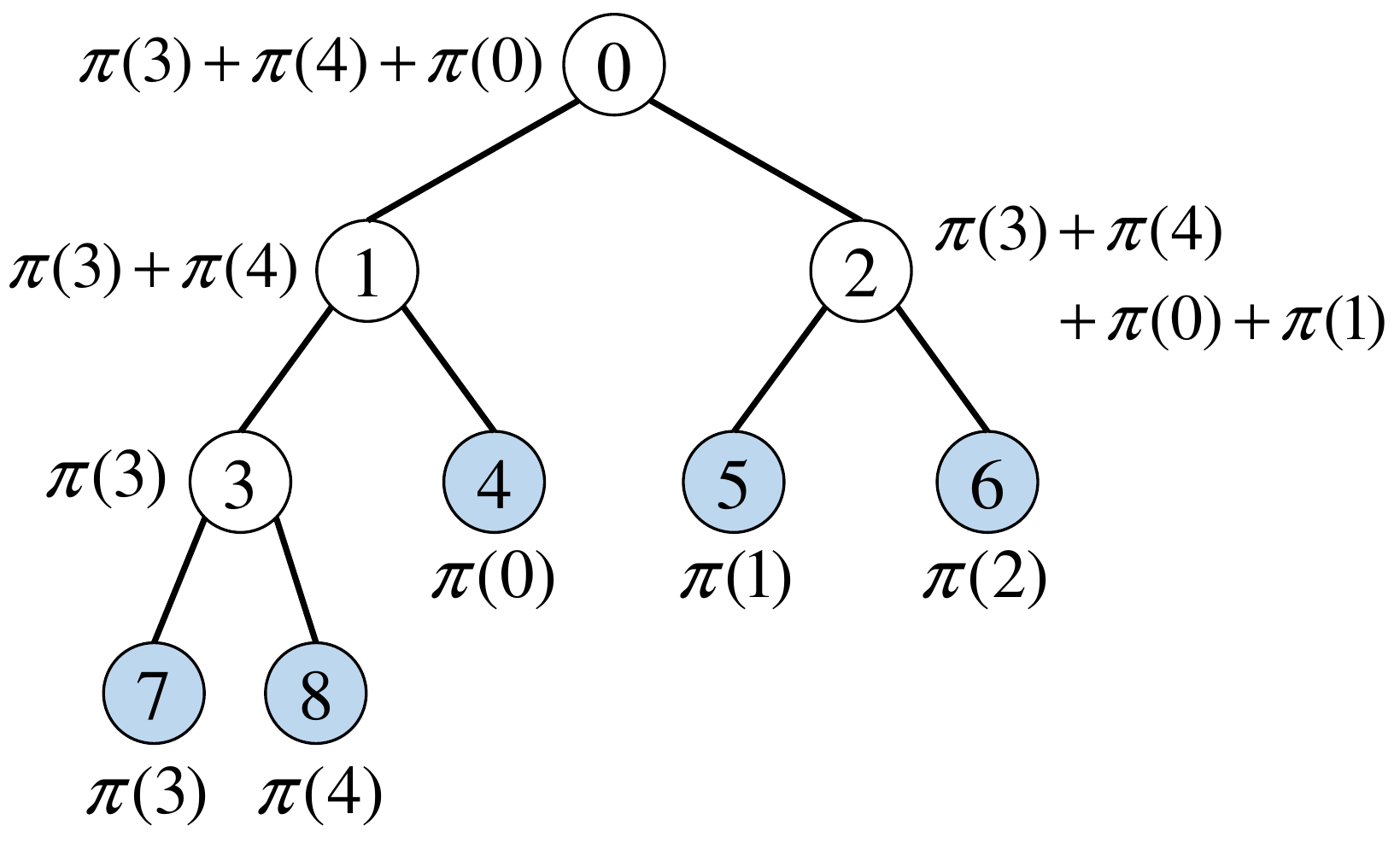}
	\caption{Example of complete binary trees for the binary-search ITS method ($N = 4$)}
	\label{fig-binary_tree}
\end{figure*}

Algorithm~\ref{algo-binary-search ITS} below describes the standard binary-search ITS algorithm based on Procedure~\ref{proc-binary-tree}. 

\begin{hmalgo}[Standard binary-search ITS]\label{algo-binary-search ITS}
\hfill \smallskip

\noindent
{
{\bf Input:}  Complete binary tree $\calT$ from Procedure~\ref{proc-binary-tree}
\\
{\bf Output:} Sample $i_{\ast}$ from $\{\pi(i);i\in\bbZ_N\}$

\vspace{-1mm}

\begin{enumerate}
\setlength{\parskip}{0cm}
\setlength{\itemsep}{0cm}
\item Generate a uniform random number $u$ in $(0,1)$, and then repeat the following operation, starting from the root and ending at one of the leaves.
\begin{enumerate}
\item If $u$ is not greater than equal to the probability of the current node, move to its left child;
\item otherwise move to its right child.
\end{enumerate}
\item Return $k-N \in \bbZ_N$, where $k$ is the label of the leaf arrived through Step~(i).
\end{enumerate}
}
\end{hmalgo}

\medskip

We consider the performance of the standard binary-search ITS algorithm, which is Algorithm~\ref{algo-binary-search ITS} together with Procedure~\ref{proc-binary-tree}. Procedure~\ref{proc-binary-tree} is the preprocessing of Algorithm~\ref{algo-binary-search ITS}, and this procedure adds, one by one, the probabilities $\pi(i)$'s retrieved according to the inorder traversal. Thus, the procedure has $O(N)$ time and space complexities. The procedure also causes $O(N)$ relative rounding error, which influences the accuracy of samples generated by Algorithm~\ref{algo-binary-search ITS}.
Algorithm~\ref{algo-binary-search ITS}, as well as Procedure~\ref{proc-binary-tree}, needs $O(N)$ space to keep the complete binary tree $\calT$. The time complexity of Algorithm~\ref{algo-binary-search ITS} is $O(\ln N)$ time complexity, because the complete binary tree $\calT$ has depth $\log_2 \lceil (N+1) \rceil$. 
As a result, the performance of the standard binary-search ITS algorithm is summarized in Table~\ref{table-binary-search ITS}.

\begin{table*}[htb]
\centering
\caption{Performance of the standard binary-search ITS algorithm}\label{table-binary-search ITS}
\begin{tabular}{|r||c|c|}
\hline
				 & \multicolumn{2}{c|}{Standard binary-search ITS}	
\\
\cline{2-3}
				 &	Procedure~\ref{proc-binary-tree} & Algorithm~\ref{algo-binary-search ITS}
\\
\hline
\hline
Time complexity  &  $O(N)$ & $O(\ln N)$
\\
\hline
Space complexity &  \multicolumn{2}{c|}{$O(N)$}
\\
\hline
Relative rounding error   &  \multicolumn{2}{c|}{$O(N)$} \\ 
\hline
\end{tabular}
\end{table*}

Tables~\ref{table-BS} and \ref{table-binary-search ITS} show that our BS algorithm has time and space complexities of the same order as those of the standard binary-search ITS algorithm. In the two algorithms, the most costly parts are their preprocessing. However, the preprocessing of our BS algorithm (i.e., Procedure~\ref{proc-BBS}) generates a sample, and it is parallelizable and thus scalable. These features do not appear in the standard binary-search ITS algorithm. In addition, our BS algorithm has a significant advantage over the standard binary-search ITS algorithm in terms of relative rounding error.

\begin{rem}
It is stated in \citealt[Section III.2]{Devr86} that Huffman tree is {\it optimal} for the binary-search ITS method in the sense that Huffman tree minimizes the average running time of mapping a uniform random number to an element of the support set of the target distribution $\{\pi(i);i\in\bbZ_N\}$. In fact, the binary-search ITS method using Huffman tree performs such mapping in $O(\log_2[1 + \mu(N)])$ time, where $\mu(N)$ is the mean of the target distribution $\{\pi(i)\}$ (for details, see \citealt[Section III.2, Theorem 2.1]{Devr86}). Unfortunately, we need $O(N\ln N)$ time to construct Huffman tree for the binary-search ITS method. Therefore, the binary search by Huffman tree, as well as, the sorting of the target distribution, is not a good strategy for the improvement of the ITS method.
\end{rem}

\section{Adaptability to multidimensional distributions}\label{sec-applicability}

In this section, we discuss the adaptability of our BS algorithm to multidimensional target distributions. Let $\bbF$ denote
\[
\bbF = \bbZ_{M_1} \times \bbZ_{M_2} \times \cdots \times \bbZ_{M_K},
\]
where $K$ is a positive integer and $M_k$'s, $k=1,2,\dots,K$, are nonnegative integers. Let $\vc{m}:=(m_1,m_2,\cdots,m_K)$ denote a vector in $\bbF$. We then define $\{p(\vc{m}); \vc{m} \in\bbF\}$ as a $K$-dimensional target distribution.

To draw a sample from this $K$-dimensional target distribution $\{p(\vc{m}); \vc{m} \in\bbF\}$, we transform it into an one-dimensional distribution $\{\pi(i);i\in\bbZ_N\}$ such that
\begin{eqnarray*}
N &=& \prod_{k=1}^K (M_k+1),
\\
\pi(f(\vc{m})) &=& p(\vc{m}),\qquad \vc{m} \in \bbF,
\end{eqnarray*}
where
\[
f(\vc{m}) = \sum_{k=1}^K  m_k \prod_{\ell=1}^{k-1} (M_{\ell}+1).
\]
In this setting, we can obtain samples $i^{(\nu)}$'s, $\nu=1,2,\dots$,  from the transformed target distribution $\{\pi(i)\}$ by the BS algorithm. We then convert the obtained samples $i^{(\nu)}$'s to $K$-dimensional vectors $\vc{m}^{(\nu)}$'s satisfying 
\[
i^{(\nu)} = f(\vc{m}^{(\nu)}).
\]
This operation can be implemented regardless of the dimension of the target distribution. Nevertheless, the BS algorithm, as well as the ITS method, cannot escape from ``Curse of Dimensionality" because its total time complexity is $O(N)$.

In what follows, we present a brief discussion of approximate sampling by the BS algorithm, which could be a solution to ``Curse of Dimensionality" in some ``lucky" cases. 
We assume that the support set $\bbF$ of $\{p(\vc{m});\vc{m}\in\bbF\}$ is possibly infinite. We also assume that
$\{p(\vc{m});\vc{m}\in\bbF\}$ denote a probability distribution such that
\begin{equation}
p(\vc{m}) = {\wt{p}(\vc{m}) \over L},\qquad \vc{m} \in \bbF,
\label{defn-p}
\end{equation}
where $L:=\sum_{\vc{m}\in\bbF}\wt{p}(\vc{m})$ is an unknown positive constant and $\wt{p}:\bbF \to (0,1)$ is a given function such that $\wt{p}(\vc{m})$ is easily calculated for all $\vc{m} \in \bbF$. For any finite $\wt{\bbF} \subseteq \bbF$, we define $\{q(\vc{m});\vc{m} \in \wt{\bbF}\}$ as a finite discrete distribution such that
\begin{equation}
q(\vc{m}) = {\wt{p}(\vc{m}) \over \wt{L}},\qquad \vc{m} \in \wt{\bbF},
\label{defn-p_{theta}}
\end{equation}
where
\begin{equation}
\wt{L} = \sum_{\vc{m} \in \wt{\bbF}} \wt{p}(\vc{m}) \le L.
\label{defn-L(theta)}
\end{equation}

According to Remark~\ref{rem-backward-BS-02}, we can draw samples from the finite distribution $\{q(\vc{m});\vc{m} \in \wt{\bbF}\}$ by applying the BS algorithm to $\{\wt{p}(\vc{m});\vc{m}\in\wt{\bbF}\}$. Note that the distribution $\{q(\vc{m});\vc{m} \in \wt{\bbF}\}$ can be considered an approximation to the distribution $\{p(\vc{m});\vc{m}\in\bbF\}$. Therefore, we can say that the samples from the distribution $\{q(\vc{m})\}$ are approximations of those from the distribution $\{p(\vc{m})\}$.

To evaluate this approximate sampling, we estimate the total variation distance between $\{p(\vc{m});\vc{m}\in\bbF\}$ and $\{q(\vc{m});\vc{m} \in \wt{\bbF}\}$, denoted by $\delta(p,q)$, i.e.,
\begin{equation*}
\delta(p,q) 
= \sum_{\vc{m}\in\wt{\bbF}} | p(\vc{m}) - q(\vc{m})|
+ \sum_{\vc{m}\in\bbF\setminus\wt{\bbF}} p(\vc{m}).
\end{equation*}
Substituting (\ref{defn-p}), (\ref{defn-p_{theta}}) and (\ref{defn-L(theta)})  into the above equation yields
\begin{eqnarray*}
\delta(p,q) 
&=& \sum_{\vc{m}\in\wt{\bbF}} \wt{p}(\vc{m}) 
\left( 
{1 \over \wt{L}} - {1 \over L}
\right)
+ \sum_{\vc{m}\in\bbF\setminus\wt{\bbF}} {\wt{p}(\vc{m}) \over L}
\nonumber
\\
&=& 1 - \sum_{\vc{m}\in\wt{\bbF}} {\wt{p}(\vc{m}) \over L}
+ \sum_{\vc{m}\in\bbF\setminus\wt{\bbF}} {\wt{p}(\vc{m}) \over L}
\nonumber
\\
&=& 2\sum_{\vc{m}\in\bbF\setminus\wt{\bbF}} {\wt{p}(\vc{m}) \over L}
\le 2\sum_{\vc{m}\in\bbF\setminus\wt{\bbF}} {\wt{p}(\vc{m}) \over \wt{L}}.
\end{eqnarray*}
Note that $\wt{L}$ is computed by the BBS procedure (Procedure~\ref{proc-BBS}). Thus, we can obtain an upper bound for $\delta(p,q)$ if we can estimate $\sum_{x\in\bbF\setminus\wt{\bbF}} \wt{p}(\vc{m})$.

We now define $\bbF(\varepsilon)$, $\varepsilon \in (0,1)$, as
\begin{equation*}
\bbF(\varepsilon)
=\left\{
\wt{\bbF} \subseteq \bbF: \sum_{\vc{m}\in\bbF\setminus \wt{\bbF}} 
{\wt{p}(\vc{m}) \over \wt{L}} < {\varepsilon \over 2}
\right\}.
\end{equation*}
If we find an $\bbF(\varepsilon)$ containing a small number of elements for a sufficiently small $\varepsilon \in (0,1)$, then we can perform 
approximate sampling from the distribution $\{p(\vc{m});\vc{m}\in\bbF\}$ with high accuracy and efficiency.

\section*{Acknowledgments}
The author acknowledges stimulating discussions with Kousei Sakaguchi. 
This research was supported in part by JSPS KAKENHI Grant Number JP15K00034.

\appendix

\makeatletter
    \renewcommand{\theequation}{%
    \thesection.\arabic{equation}}
    \@addtoreset{equation}{section}
  \makeatother

\section{Appendix: Proof of Theorem~\ref{thm-BBS}}
We first show that $\bbA$ has only one element when Steps (i) and (ii) (of Procedure~\ref{proc-BBS}) are completed. To facilitate the discussion, let $\bbA_{d+1}$ denote the set $\bbA$ before Step~(ii) starts, i.e.,
\begin{equation}
\bbA_{d+1}
= \bbB_+^{(d)}
\subseteq \bbB^d.
\label{eqn-A_{d+1}}
\end{equation}
For $j=d,d-1,\dots,1$, let $\bbA_j$ denote the set $\bbA$ at the end of the iteration of Step~(ii) with $\ell=j$. Thus, $\bbA_1$ denotes the set $\bbA$ after Step~(ii) is completed (see Fig.~\ref{fig-BBS}).
Furthermore, let $\card(\cdot)$ denote the cardinality of the set between the parentheses. It then follows from (\ref{eqn-A_{d+1}}) and Procedure~\ref{proc-BBS} that 
\begin{eqnarray*}
1 \le \card(\bbA_{d+1}) 
&\le& \card(\bbB^d) = 2^d,
\\
\card( \bbA_j )
&=& \lceil \card(\bbA_{j+1}) / 2\rceil,
~~ j=d,d-1,\dots,1.
\end{eqnarray*}
Therefore, 
\[
1 \le \card( \bbA_j ) \le 2^{j-1},
\qquad j=d,d-1,\dots,1,
\]
which leads to $\card( \bbA_1 ) = 1$. 

Next we show that the BBS procedure generates a desired sample following the target distribution $\{\pi(i)\}$. According to Step (ii-b), we have, for $\ell=d,d-1,\dots,1$ and $\vc{n}^{(\ell-1)} \in \bbB_+^{(\ell-1)}$,
\begin{eqnarray}
\lefteqn{
\PP(\{(\vc{n}^{(\ell-1)},n_{\ell},\dots,n_d) \in \bbA_{\ell}\})
}
\quad &&
\nonumber\\
&=&
\prod_{j=\ell}^d
\{ \rho_{j-1}(\vc{n}^{(j-1)} ) \}^{n_j}  
\{ \ol{\rho}_{j-1}(\vc{n}^{(j-1)} ) \}^{1 - n_j}. \qquad
\label{eqn-170216-01}
\end{eqnarray}
Combining (\ref{eqn-varpi(n)-02}), (\ref{eqn-170216-01}) and $\card( \bbA_1 ) = 1$ yields
\begin{eqnarray*}
\PP( \{ \bbA_1 = \{ \vc{n}^{(d)} \} \} )
&=& \pi(\sigma_d(\vc{n}^{(d)}))
= \pi(i_{\ast}),
\end{eqnarray*}
where $i_{\ast} = \sigma_d(\vc{n}^{(d)})$.
Recall here that Step (iii) returns $i_{\ast} = \sigma_d(\vc{n}^{(d)})$ with the unique element $\vc{n}^{(d)}$ of $\bbA_1$. Therefore,
\begin{eqnarray*}
\PP(\{\mbox{The BBS procedure returns $i_{\ast}$} \})
=  \pi(i_{\ast}).
\end{eqnarray*}
The proof is completed. 


\section*{Acknowledgments}
The author acknowledges stimulating discussions with Kousei Sakaguchi.



%
%
\bibliographystyle{elsarticle-harv} 
%
%

\begin{thebibliography}{4}
\expandafter\ifx\csname natexlab\endcsname\relax\def\natexlab#1{#1}\fi
\expandafter\ifx\csname url\endcsname\relax
  \def\url#1{\texttt{#1}}\fi
\expandafter\ifx\csname urlprefix\endcsname\relax\def\urlprefix{URL }\fi

\bibitem[{Brooks et~al.(2011)Brooks, Gelman, Jones, and Meng}]{Broo11}
Brooks, S., Gelman, A., Jones, G.~L., Meng, X.-L., 2011. Handbook of Markov
  Chain Monte Carlo. CRC Press, Boca Raton, FL.

\bibitem[{Devroye(1986)}]{Devr86}
Devroye, L., 1986. Non-Uniform Random Variate Generation. Springer, New York.

\bibitem[{Higham(1993)}]{High93}
Higham, N.~J., 1993. The accuracy of floating point summation. SIAM Journal on
  Scientific Computing 14~(4), 783--799.

\bibitem[{Huber(2016)}]{Hube16-book}
Huber, M.~L., 2016. Perfect Simulation. CRC Press, Boca Raton, FL.

\end{thebibliography}


\end{document}